\documentclass[11pt]{article}
\usepackage[T1]{fontenc}
\usepackage{lmodern,microtype,amsmath,amssymb,amsthm}
\usepackage[margin=1in]{geometry}
\usepackage[colorlinks=true,linkcolor=blue,citecolor=blue,urlcolor=blue]{hyperref}
\hypersetup{pdftitle={Pointwise provable equality and the failure of composition},
  pdfauthor={Florian Lengyel}, pdfsubject={Version 37}}
\newtheorem{theorem}{Theorem}
\newtheorem{corollary}[theorem]{Corollary}
\newtheorem{proposition}[theorem]{Proposition}
\newcommand{\PA}{\mathrm{PA}}
\newcommand{\Con}{\operatorname{Con}}
\newcommand{\Proof}{\operatorname{Proof}}
\newcommand{\comp}{\operatorname{comp}}
\title{Pointwise provable equality and the failure of composition}
\author{Florian Lengyel\thanks{Email: \texttt{flengyel@gradcenter.cuny.edu}}}
\date{September 30, 2026}
\begin{document}
\maketitle

\begin{abstract}
In their studies of pathologies in recursion categories, Montagna (1989) and Di Paola--Montagna (1991) introduce the algebraic
systems $S'$ and $S'_T$, respectively, and claim that they are categories. We show that the proposed composition is not independent 
of the choice of representatives. For every consistent recursively enumerable extension $T$ of Peano arithmetic ($\mathrm{PA}$), 
we exhibit two unary programs whose partial functions are provably equal in $T$, separately at each standard input. Composing each after a 
program that searches for a $T$-proof of contradiction and returns its code yields programs that are not
equivalent in this sense. An alternative proof uses the productivity of the complement of the diagonal halting set. 
Montagna's $S'$ is the case $T=\mathrm{PA}$. More generally, for consistent $T\supseteq\mathrm{PA}$, pointwise provable 
equality is a composition congruence exactly when $T$ proves every true $\Pi^0_1$ sentence, in which case it 
is extensional equality. This completeness condition fails for every consistent recursively enumerable $T\supseteq\mathrm{PA}$ 
by G\"odel's second incompleteness theorem. For every extension $T\supseteq\mathrm{PA}$, the least composition congruence 
containing pointwise provable equality is extensional equality if $T$ is $\Sigma^0_1$-sound and the universal relation otherwise.
\end{abstract}

\noindent\textit{2020 Mathematics Subject Classification.}\\
Primary 03D75; Secondary 03F40, 08A30, 18B10.

\section{Introduction}\label{sec:introduction}

Montagna and Di Paola--Montagna propose algebraic systems $S'$ and
$S'_T$, asserting that $S'$ is a pointed $p$-category
\cite[pp.~106, 108]{Montagna} and $S'_T$ a $p$-recursion category
\cite[p.~646]{DiPaolaMontagna}.
For every consistent recursively enumerable extension $T$ of
Peano arithmetic ($\PA$), we show that the proposed composition is not
independent of the choice of representatives.
The proposed morphisms of $S'$ and $S'_T$ are equivalence classes
of program indices under $T$-provable pointwise extensional equality
of partial functions defined by those indices, with composition induced
by composition of programs.

Montagna's $S$ \cite[p.~105]{Montagna} and Di Paola--Montagna's $S_T$
\cite[p.~646]{DiPaolaMontagna}, which use provability of a single
universally quantified equality statement, are unaffected by this obstruction.

\subsection{Relation to previous work}

Bergstra--Heering study $\omega$-complete algebraic specifications,
distinguishing derivability of every closed instance of an equation
from derivability of the open equation
\cite[Sections~1.2--1.3, pp.~150--152]{BergstraHeering}.
The analogous distinction here is between separate proofs of equality
at each standard input and a single proof of the universally quantified
equality.

Program indices with a primitive recursive composition operation form
a computable algebra with one binary operation. Computable quotient
presentations describe an algebra by computable operations on
representatives and an equivalence relation compatible with those
operations; the equivalence relation need not be computable
\cite[p.~9]{GodziszewskiHamkins}.
Theorem~\ref{thm:characterization} determines when pointwise provable
equality is a congruence for this operation, and
Theorem~\ref{thm:generated-congruence} identifies the least congruence
containing that equality.

Robertson--Ru\v{s}kuc--Thomson prove that the monoid of unary partial
recursive functions under composition is finitely generated but not
finitely presented
\cite[Theorems~2.2--2.3, pp.~168--169]{RobertsonRuskucThomson}.
Theorem~\ref{thm:generated-congruence} identifies this monoid as the
quotient by the composition congruence generated by pointwise provable
equality when $T$ is $\Sigma^0_1$-sound; otherwise the quotient has one
element. Their results concern finite generation and finite presentation
of the extensional monoid; ours identifies this monoid as the quotient
by the congruence generated by pointwise provable equality when $T$
is $\Sigma^0_1$-sound.

\section{Programs and pointwise provable equality}\label{sec:programs}

\subsection{The constructions of Montagna and Di Paola--Montagna}

Fix an acceptable Kleene numbering $(\varphi_e)_{e\in\omega}$ of the unary
partial recursive functions and an arithmetization satisfying the
$\PA$-provable properties specified in
Section~\ref{sec:arithmetization}.
Throughout, a program is an index in this
numbering. Thus $e$ denotes an index, $\varphi_e$ denotes the partial function
computed by $e$, and $e(x)$ abbreviates $\varphi_e(x)$.
For partial expressions, $\simeq$ means that both are undefined or
both are defined with the same value.
Acceptability is understood
in the sense of \cite[Definitions~II.5.1--II.5.2, p.~215]{Odifreddi}.

In the proposed construction, $T$ is a consistent extension of $\PA$.
We take the object class and its effective coding from
\cite[pp.~105--106]{Montagna} and
\cite[p.~646]{DiPaolaMontagna}: the objects are generated from
$\omega$ by iterated binary products and disjoint unions.

For objects $A,B$, the proposed morphisms $A\to B$ of $S'_T$ are
classes of program indices, called G\"odel numbers in the cited papers,
that compute partial recursive functions $A\rightharpoonup B$.
Two indices $e,d$ are identified,
written $e\approx_{T,A}d$, precisely when, for every $a\in A$,
\begin{equation*}
 T\vdash
 \bigl[(\varphi_e(\bar a)\downarrow
          \leftrightarrow\varphi_d(\bar a)\downarrow)
 \land
 (\varphi_e(\bar a)\downarrow
          \to\varphi_e(\bar a)=\varphi_d(\bar a))\bigr].
\end{equation*}
This is the pointwise definition in \cite[p.~646]{DiPaolaMontagna}.
Here $\bar a$ denotes the numeral for the code of $a$; the quantification
over $A$ is external to $T$.
The symbols $\downarrow$ and $\uparrow$ denote convergence and
divergence, respectively.
Let $E_{e,d}(x)$ denote the displayed conjunction with $\bar a$ replaced
by $x$; it expresses Kleene equality at $x$.
Montagna's $S'$ is the case $T=\PA$; the divergence equivalence in
his first conjunct gives the same relation
\cite[p.~106]{Montagna}.

The proposed identity on $A$ is the class of an index computing its identity
function. Fix a primitive recursive composition-index function $\comp$ such
that $\varphi_{\comp(e,d)}(x)\simeq\varphi_e(\varphi_d(x))$ for all indices
$e,d$ and all $x$. Its existence follows from the enumeration and
$s$-$m$-$n$ theorems \cite[Theorem~II.1.5 and Proposition~II.1.7, pp.~130--131]{Odifreddi}.
For program indices $f,g$, write $f\bullet g:=\comp(f,g)$.
Thus $\bullet:\omega\times\omega\to\omega$ is a fixed primitive
recursive operation on indices, representing composition of partial
functions: $\varphi_{f\bullet g}=\varphi_f\circ\varphi_g$.
Here equality means equality of partial functions, including their domains
of definition. The program with index $g$, the \emph{inner program},
runs first; if it halts with output $y$, the program with index $f$,
the \emph{outer program}, runs on $y$.

If $d$ and $e$ compute partial functions
$A\rightharpoonup B$ and $B\rightharpoonup D$, respectively, the intended
composition is $[e]\circ[d]=[\comp(e,d)]$,
which must be independent of the choice of representatives.
No published correction of the pointwise definitions is known to the author.

\subsection{Programs and arithmetization}\label{sec:arithmetization}

For $A=B=\omega$, every index of a unary partial recursive function is
allowed as a representative. Write $\approx_T$ for
$\approx_{T,\omega}$.
For recursion-theoretic terminology, including recursive enumerability,
see \cite{Rogers,Odifreddi}; for the arithmetical hierarchy, see
\cite[Definition~IV.1.6, p.~367]{Odifreddi}.
Truth refers to the standard model $\omega$.
A theory is $\Sigma^0_1$-sound if every $\Sigma^0_1$ sentence it proves
is true.

Alongside pointwise provable equality, we use uniform provable equality:
\begin{align}
 e\equiv_T d
 &\quad\Longleftrightarrow\quad T\vdash\forall x\,E_{e,d}(x),
 \label{eq:uniform}\\
 e\approx_T d
 &\quad\Longleftrightarrow\quad
 \text{for every }n\in\omega,\quad T\vdash E_{e,d}(\bar n).
 \label{eq:pointwise}
\end{align}
The proposed endomorphisms of $\omega$ form the quotient set
$Q_T=\omega/{\approx_T}$; let $q_T$ be its quotient map.
Composition is independent of the choice of representatives precisely
when there is a binary operation $\star$ on $Q_T$ satisfying
\begin{equation}\label{eq:quotient-composition}
 q_T(\comp(e,d))=q_T(e)\star q_T(d)
 \qquad\text{for all }e,d.
\end{equation}
A composition congruence is an equivalence relation preserved by
$\comp$ in both arguments. Thus a binary operation satisfying
\eqref{eq:quotient-composition} exists exactly when $\approx_T$ is a
composition congruence.
We shall show that no such binary operation exists for consistent
recursively enumerable $T$.

The proofs use the following arithmetization.
Following Odifreddi
\cite[Theorem~II.1.2, p.~129]{Odifreddi}, write $\mathcal{T}_1(e,x,s)$ for Kleene's
primitive recursive normal-form predicate for unary programs, and $U$
for the associated primitive recursive output function.
In the index convention of \cite[Definition~II.1.4, p.~130]{Odifreddi},
\[
 \varphi_e(x)\simeq U\bigl(\mu s\,\mathcal{T}_1(e,x,s)\bigr).
\]
This is the unary case of Kleene's normal-form theorem
\cite[\S63, Theorem~XIX(a), p.~330]{Kleene}.
The third argument $s$ codes a halting computation of the program with
index $e$ on input $x$; its output is $U(s)$.
The subscript $1$ indicates that the program has one input.
We reserve $T$ for the arithmetical theory.
This is the
Kleene normal-form convention used by Di Paola--Montagna
\cite[p.~645]{DiPaolaMontagna}.

Define the arithmetical $\Sigma^0_1$ graph formula by
\[
 C_e(x,y)\;:\Longleftrightarrow\;
 \exists s\,\bigl(\mathcal{T}_1(\bar e,x,s)\land U(s)=y\bigr).
\]
Here $\mathcal{T}_1(\bar e,x,s)$ and $U(s)=y$ abbreviate fixed arithmetical
formulas representing in $\PA$ the computation predicate and the
graph of the output function, respectively.
Here $y$ denotes the output.
Convergence $\varphi_e(x)\downarrow$ abbreviates
$\exists y\,C_e(x,y)$, and divergence $\varphi_e(x)\uparrow$ its
negation. Functionality gives the $\PA$-provable equivalence
\[
 E_{e,d}(x)\longleftrightarrow
 \forall y\,\bigl(C_e(x,y)\longleftrightarrow C_d(x,y)\bigr).
\]
By the formalized enumeration and iteration theorems in $\PA$, invoked in
\cite[p.~647]{DiPaolaMontagna}, the fixed $\comp$ may be taken so that its
graph formula satisfies
\begin{equation}\label{eq:composition-graph}
 \PA\vdash\forall x\,\forall z\,
 \left(C_{\comp(e,d)}(x,z)\longleftrightarrow
       \exists y\,\bigl(C_d(x,y)\land C_e(y,z)\bigr)\right)
\end{equation}
for all indices $e,d$. The graph formulas are provably functional in
$\PA$. More generally, if $R(x,y)$ is a $\Sigma^0_1$ formula whose
functionality is provable in $\PA$, there is an index $e$ such that
\begin{equation}\label{eq:graph-realization}
 \PA\vdash\forall x\,\forall y\,
 \bigl(C_e(x,y)\longleftrightarrow R(x,y)\bigr).
\end{equation}
In applications of Feferman's results, we pass to
$\PA$-provably equivalent RE-formulas in his syntactic sense
\cite[p.~267]{Feferman}.
Choose such a formula $\widehat R(z,x,y)$ with free variables
exactly $z,x,y$ and
\[
 \PA\vdash\forall z\,\forall x\,\forall y\,
 \bigl(\widehat R(z,x,y)\longleftrightarrow R(x,y)\bigr).
\]
This is obtained by adjoining dummy equalities for any missing
variables and taking an RE normal form that retains those
free variables.
Its functionality is therefore provable in Feferman's primitive
recursive extension $M$ of $\PA$.
By \cite[Theorem~2.13, p.~273]{Feferman}, there is an index $e$
whose graph formula is provably equivalent in $M$ to
$\widehat R(\bar e,x,y)$, hence to $R(x,y)$.
Eliminate the auxiliary
primitive recursive definitions by \cite[Theorem~2.15, p.~274]{Feferman}
to obtain~\eqref{eq:graph-realization}. Montagna uses this specialization
in \cite[p.~109, proof of~$(*)$]{Montagna}.
The graph equations below are obtained from~\eqref{eq:graph-realization}.

\section{An obstruction to composition}\label{sec:obstruction}

In this section, assume that $T\supseteq\PA$ is consistent and recursively
enumerable. Fix an index
$a_T$ for a partial recursive function whose range is the set of G\"odel
codes of the nonlogical axioms of $T$.
We use Feferman's first-order calculus with equality, with the
logical axiom system chosen as in \cite[p.~266]{Feferman} so that
modus ponens is the only inference rule.
A derivation is a finite nonempty sequence of formulas, each a
logical axiom, a nonlogical axiom of $T$, or obtained from earlier
formulas by modus ponens; its conclusion is the last formula.
At each occurrence of a nonlogical
axiom with G\"odel code $r$, a derivation code records an input $m$ to this
enumerator and a finite computation-history code $s$ satisfying
$\mathcal{T}_1(a_T,m,s)$ and $U(s)=r$. Here $m$ is the enumerator input, $s$ is a
computation-history code, and the pair $(m,s)$ witnesses that $r$ is
enumerated as an axiom. The relation $\Proof_T(p,q)$, meaning
that $p$ codes a $T$-derivation whose conclusion has G\"odel code $q$, is
therefore primitive recursive. Put
\[
 \operatorname{Pr}_T(q):\!\equiv\exists p\,\Proof_T(p,q).
\]
For this derivation calculus, $\operatorname{Pr}_T$ satisfies the
Hilbert--Bernays--L\"ob derivability conditions; see
\cite[pp.~115--116]{Lob} for the conditions and
\cite[pp.~269--271, especially Theorems~2.4(i) and~2.8]{Feferman}
for the relevant arithmetization results for RE axiom numerations.
Write
$\ulcorner\sigma\urcorner$ for the G\"odel code of a sentence $\sigma$. Put
\[
 P_T(p):=\Proof_T(p,\ulcorner0=1\urcorner),
 \qquad
 \Con(T):=\forall p\,\neg P_T(p).
\]
Thus $P_T(p)$ says that $p$ is a code of a $T$-derivation of $0=1$.
The RE axiom numeration determined by $a_T$ is
\[
 \alpha_T(r):\!\equiv
 \exists m\,\exists s\,
 \bigl(\mathcal{T}_1(\overline{a_T},m,s)\land U(s)=r\bigr).
\]
Collecting the finitely many axiom witnesses, or deleting them
from an augmented derivation, gives $\PA$-provably equivalent
provability predicates for our coding and for $\alpha_T$.
Formalized proof manipulation, using a fixed $T$-proof of
$0\ne1$, then gives
$\PA\vdash\Con(T)\leftrightarrow\Con_{\alpha_T}$,
where $\Con_{\alpha_T}$ is Feferman's consistency sentence
\cite[p.~269]{Feferman}.
Let $z$ be a program whose graph is provably empty in $\PA$, and
let $\mathrm{id}$ be an identity program with
\[
 \PA\vdash\forall x\,\forall y\,
 \bigl(C_{\mathrm{id}}(x,y)\longleftrightarrow x=y\bigr).
\]

\begin{theorem}\label{thm:obstruction}
For every consistent recursively enumerable $T\supseteq\PA$, there are
programs $f,g$ satisfying
\begin{equation}\label{eq:obstruction-properties}
 f\approx_T\mathrm{id},\qquad
 f\bullet g\equiv_T z,\qquad
 \mathrm{id}\bullet g\equiv_T g,\qquad
 g\not\approx_T z.
\end{equation}
Consequently, no binary operation on $Q_T$ satisfies
\eqref{eq:quotient-composition}.
\end{theorem}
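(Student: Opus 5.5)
The plan is to take $g$ to be the program that ignores its input and searches for a $T$-proof of $0=1$, returning the least such code, and to take $f$ to be a \emph{guarded identity} that returns its input unless that input codes a proof of $0=1$. Concretely, by~\eqref{eq:graph-realization} I choose indices $g$ and $f$ with
\[
 \PA\vdash\forall x\,\forall y\,\bigl(C_g(x,y)\leftrightarrow P_T(y)\land\forall p<y\,\neg P_T(p)\bigr),
 \qquad
 \PA\vdash\forall y\,\forall w\,\bigl(C_f(y,w)\leftrightarrow w=y\land\neg P_T(y)\bigr).
\]
Both right-hand sides are $\PA$-provably equivalent to $\Sigma^0_1$ formulas, since $P_T$ is primitive recursive and hence $\PA$-provably $\Delta^0_1$. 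Both are provably functional in $\PA$: the first by the least-number form, the second trivially.

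\textbf{Routine properties.}
\begin{itemize}
\item $f\approx_T\mathrm{id}$. Fix $n\in\omega$. Since $T$ is consistent, $P_T(\bar n)$ is false. Because $P_T$ is primitive recursive, $\PA\vdash\neg P_T(\bar n)$. Hence $\PA\vdash\forall w\,(C_f(\bar n,w)\leftrightarrow w=\bar n)\leftrightarrow\forall w\,(C_{\mathrm{id}}(\bar n,w))$ holds in the required form, which gives $E_{f,\mathrm{id}}(\bar n)$ via the functionality equivalence.
\item $f\bullet g\equiv_T z$. By~\eqref{eq:composition-graph}, $C_{f\bullet g}(x,w)$ implies that there is a $y$ with $C_g(x,y)$ and $C_f(y,w)$. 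This yields both $P_T(y)$ and $\neg P_T(y)$, so $\PA\vdash\forall x\,\forall w\,\neg C_{f\bullet g}(x,w)$. Since the graph of $z$ is provably empty, $\PA\vdash\forall x\,E_{f\bullet g,z}(x)$.
\item $\mathrm{id}\bullet g\equiv_T g$. This is immediate from~\eqref{eq:composition-graph} together with the graph equation for $\mathrm{id}$.
\end{itemize}

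\textbf{The main step: $g\not\approx_T z$.} This is where the incompleteness theorem enters. For any $n$, the formula $E_{g,z}(\bar n)$ is $\PA$-provably equivalent to $\neg\exists y\,C_g(\bar n,y)$. By the least-number principle in $\PA$, $\exists y\,C_g(\bar n,y)$ is $\PA$-equivalent to $\exists p\,P_T(p)$. So $T\vdash E_{g,z}(\bar n)$ iff $T\vdash\Con(T)$. Using $\PA\vdash\Con(T)\leftrightarrow\Con_{\alpha_T}$ and the derivability conditions recorded above, G\"odel's second incompleteness theorem for the consistent RE theory $T\supseteq\PA$ gives $T\nvdash\Con(T)$. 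Hence $T\nvdash E_{g,z}(\bar 0)$, so $g\not\approx_T z$. The care needed here is to make sure the version of G\"odel II invoked (Feferman's, for the numeration $\alpha_T$) applies to exactly the sentence $\Con(T)$. That transfer is what the equivalence $\Con(T)\leftrightarrow\Con_{\alpha_T}$ established before the theorem provides.

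\textbf{The consequence.} First, $\equiv_T$ implies $\approx_T$ by instantiating the universal quantifier at each numeral. Now suppose some $\star$ satisfies~\eqref{eq:quotient-composition}. Then
\[
 q_T(z)=q_T(f\bullet g)=q_T(f)\star q_T(g)=q_T(\mathrm{id})\star q_T(g)=q_T(\mathrm{id}\bullet g)=q_T(g).
\]
This says $g\approx_T z$, a contradiction.
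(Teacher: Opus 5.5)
Your proposal is correct and follows essentially the same route as the paper: the same guarded identity $f$ and proof-search program $g$, the same reduction of $g\approx_T z$ at input $0$ to $T\vdash\Con(T)$, and the same chain of quotient equalities. One slip: the displayed biconditional in your $f\approx_T\mathrm{id}$ bullet is refutable as written; what you mean is that $\PA\vdash\forall w\,(C_f(\bar n,w)\leftrightarrow w=\bar n)$ and likewise for $C_{\mathrm{id}}$, whence $\PA\vdash\forall w\,(C_f(\bar n,w)\leftrightarrow C_{\mathrm{id}}(\bar n,w))$, i.e.\ $E_{f,\mathrm{id}}(\bar n)$.
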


\begin{proof}
Choose
\[
 f(n)=
 \begin{cases}
 n,&\neg P_T(n),\\
 \uparrow,&P_T(n),
 \end{cases}
 \qquad
 g(x)=\mu p\,P_T(p).
\]
Choose their indices so that $\PA$ proves
\begin{align}
 C_f(n,y)&\longleftrightarrow \neg P_T(n)\land y=n,
 \label{eq:guard-graph}\\
 C_g(x,p)&\longleftrightarrow
       P_T(p)\land\forall r<p\,\neg P_T(r).
 \label{eq:search-graph}
\end{align}
Fix $n\in\omega$. Consistency of $T$ implies that $P_T(n)$ is false.
Since $P_T$ is primitive recursive, $\PA$ verifies this closed instance:
$\PA\vdash\neg P_T(\bar n)$. Hence
$\PA\vdash f(\bar n)=\bar n$.
Thus $f\approx_T\mathrm{id}$.

By~\eqref{eq:composition-graph}, \eqref{eq:guard-graph}, and
\eqref{eq:search-graph}, convergence of
$f\bullet g$ would require an intermediate value $p$ satisfying both
$P_T(p)$ and $\neg P_T(p)$. Hence
\begin{equation}\label{eq:composite-empty}
 \PA\vdash\forall x\,(f\bullet g)(x)\uparrow,
\end{equation}
so $f\bullet g\equiv_Tz$. The identity law
$\mathrm{id}\bullet g\equiv_Tg$ is also provable in $\PA$.

By~\eqref{eq:search-graph},
\[
 \PA\vdash
 \bigl(g(0)\uparrow\longleftrightarrow\Con(T)\bigr).
\]
If $g\approx_Tz$, the instance at input $0$ would therefore
yield $T\vdash\Con(T)$, contrary to G\"odel's second incompleteness
theorem \cite[Theorem~2.7(ii), p.~271]{Feferman}.
This proves all four assertions in \eqref{eq:obstruction-properties}.

Finally, if $\star$ satisfied~\eqref{eq:quotient-composition}, then
\[
 \begin{aligned}
 q_T(g)
 &=q_T(\mathrm{id}\bullet g)
   =q_T(\mathrm{id})\star q_T(g)\\
 &=q_T(f)\star q_T(g)
   =q_T(f\bullet g)
   =q_T(z),
 \end{aligned}
\]
contradicting $g\not\approx_Tz$.
\end{proof}

Under the consistency assumption, $\varphi_f=\varphi_{\mathrm{id}}$
and $\varphi_g=\varphi_z$: $f$ computes the identity and $g$ is nowhere
defined. Nevertheless, $g\not\approx_Tz$. The obstruction concerns
composition on pointwise provability classes; composition of the underlying
partial functions remains well defined.

\begin{corollary}\label{cor:not-category}
For consistent recursively enumerable $T\supseteq\PA$, the algebraic
system $S'_T$ is not a category under the proposed composition.
The failure already occurs for indices representing partial maps
$\omega\rightharpoonup\omega$. In particular, it occurs for
Montagna's $S'$ with $T=\PA$.
\end{corollary}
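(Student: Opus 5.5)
The corollary follows from Theorem~\ref{thm:obstruction} by unwinding what it means for $S'_T$ to be a category. A category structure needs a composition on morphism classes that is well defined, that is, independent of representatives. I restrict attention to the hom-set $\omega\to\omega$. By the definitions in Section~\ref{sec:programs}, its morphisms are exactly the classes of $Q_T=\omega/{\approx_T}$, since for $A=B=\omega$ every index is an admissible representative and $\approx_{T,\omega}$ is $\approx_T$. The proposed composition on this hom-set is $[e]\circ[d]=[\comp(e,d)]$. If this were well defined, the operation $q_T(e)\star q_T(d):=q_T(\comp(e,d))$ would be a binary operation on $Q_T$ satisfying \eqref{eq:quotient-composition}. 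Theorem~\ref{thm:obstruction} rules out any such operation.

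To make the failure explicit, I would exhibit it with the programs $f,g$ from the theorem. We have $f\approx_T\mathrm{id}$, so $[f]=[\mathrm{id}]$ as morphisms $\omega\to\omega$. Yet composing each with $[g]$ gives $[f\bullet g]=[z]$ and $[\mathrm{id}\bullet g]=[g]$, and these are distinct classes because $g\not\approx_T z$. Here I use that uniform provable equality implies pointwise provable equality: instantiating the universal quantifier at each numeral turns $T\vdash\forall x\,E(x)$ into $T\vdash E(\bar n)$. So two representatives of the same morphism, composed with the same morphism, yield different morphisms. No composition law can therefore be defined, and in particular $S'_T$ is not a category. The failure lives entirely inside partial maps $\omega\rightharpoonup\omega$, which gives the second sentence of the corollary. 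One should also check that $\omega$ is an object: the object class is generated from $\omega$, so it is.

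For Montagna's $S'$, take $T=\PA$. This theory is consistent, recursively enumerable, and contains $\PA$. As noted in Section~\ref{sec:programs}, Montagna's identification with the divergence-equivalence conjunct is the same relation $\approx_{\PA}$. Hence the same $f,g$ witness the failure for $S'$.

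The only point needing care is the bookkeeping of the definitions rather than any new mathematics. One must confirm that the ambient hom-set really admits all indices as representatives of maps $\omega\rightharpoonup\omega$, and that Montagna's relation coincides with $\approx_{\PA}$. Both are stated in the setup, so this step is routine.
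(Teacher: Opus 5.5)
Your proposal is correct and follows the paper's route. The corollary is an immediate consequence of Theorem~\ref{thm:obstruction}: the hom-set $\omega\to\omega$ of $S'_T$ is exactly $Q_T$, so well-definedness of $[e]\circ[d]=[\comp(e,d)]$ would yield an operation satisfying \eqref{eq:quotient-composition}, and $T=\PA$ gives Montagna's $S'$. Your explicit witness, $[f]=[\mathrm{id}]$ but $[f\bullet g]=[z]\neq[g]=[\mathrm{id}\bullet g]$, is precisely the computation that closes the theorem's proof, and you rightly observe that neither associativity nor the identity laws are needed.
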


The categorical associativity and identity axioms are not needed for
this contradiction.
Any composition congruence containing $\approx_T$ must identify $g$
with $z$, and therefore strictly enlarge $\approx_T$.
Theorem~\ref{thm:generated-congruence} determines the least such congruence.

The assertion that $S'_T$ is locally connected if and only if $T$
is $\Sigma^0_1$-sound \cite[Theorem~4.5, pp.~658--659]{DiPaolaMontagna}
presupposes that $S'_T$ is a category.

\subsection{A proof from productivity}\label{sec:productive}

Di Paola--Heller prove a creativity theorem for the diagonal halting
domain in dominical recursion categories
\cite[Theorem~8.5, pp.~622--623]{DiPaolaHeller}.

The existence assertion in Theorem~\ref{thm:obstruction} also follows
from the productivity of the complement of the diagonal halting set
\cite[Section~5.3, p.~136]{GallierQuaintance}.
Put
\[
 K=\{e:\varphi_e(e)\downarrow\},\qquad
 D_T=\{e:T\vdash\varphi_e(\bar e)\uparrow\},
 \qquad W_e=\operatorname{dom}(\varphi_e).
\]
The set $D_T$ is recursively enumerable. Consistency gives
$D_T\subseteq\overline K$, since every actual halting computation is
verifiable in $\PA$. Choose $d$ with $W_d=D_T$.
If $d\in D_T$, then $\varphi_d(d)\downarrow$, contradicting
$D_T\subseteq\overline K$. Hence
\[
 d\in\overline K\setminus D_T.
\]
This is the productive-set argument: whenever $W_e\subseteq\overline K$,
the index $e$ itself belongs to $\overline K\setminus W_e$.

Let $R(n):\!\equiv\neg\mathcal{T}_1(\bar d,\bar d,n)$, and choose programs
\[
 f(n)=
 \begin{cases}
 n,&R(n),\\
 \uparrow,&\neg R(n),
 \end{cases}
 \qquad g(x)=\mu n\,\neg R(n),
\]
with $\PA$-provable graph equations
\[
 C_f(n,y)\longleftrightarrow R(n)\land y=n,
 \qquad
 C_g(x,n)\longleftrightarrow
 \neg R(n)\land\forall k<n\,R(k).
\]
Every standard instance $R(\bar n)$ is $\PA$-provable, so
$f\approx_T\mathrm{id}$. The graph equations give
$f\bullet g\equiv_Tz$ and $\mathrm{id}\bullet g\equiv_Tg$.
By the definition of $C_d$ and the $\PA$-provable totality of $U$,
\[
 \PA\vdash
 \bigl(\varphi_d(\bar d)\uparrow\longleftrightarrow\forall n\,R(n)\bigr).
\]
Together with the graph equation for $g$ and the definition of $D_T$,
this gives
\[
 g\approx_Tz
 \quad\Longleftrightarrow\quad T\vdash\forall n\,R(n)
 \quad\Longleftrightarrow\quad d\in D_T,
\]
which is false. Thus all four assertions
of~\eqref{eq:obstruction-properties} hold.
The proof using second incompleteness selects the particular unprovable
sentence $\Con(T)$; productivity supplies another true unprovable
divergence sentence.

\section{When pointwise provable equality is a congruence}\label{sec:characterization}

For any $T\supseteq\PA$, left composition with a fixed program $f$
is compatible with $\approx_T$: replacing the inner program gives
$g\approx_T h\Rightarrow f\bullet g\approx_T f\bullet h$.
Indeed, for each $n\in\omega$, substitute the theorem
$T\vdash\forall y\,(C_g(\bar n,y)\leftrightarrow C_h(\bar n,y))$
into~\eqref{eq:composition-graph}. Right composition with a fixed program $g$
instead replaces the outer program: $f\approx_T h$ would have to imply
$f\bullet g\approx_T h\bullet g$. Theorem~\ref{thm:obstruction} gives
$f\approx_T\mathrm{id}$ but
$f\bullet g\not\approx_T\mathrm{id}\bullet g$.
The family $T\vdash E_{f,h}(\bar m)$ $(m\in\omega)$ need not yield a
single $T$-derivation of $\forall y\,E_{f,h}(y)$. Such a derivation would justify
replacing $C_f(y,z)$ by $C_h(y,z)$ beneath the existential quantifier over
the intermediate output in~\eqref{eq:composition-graph}.

\begin{theorem}\label{thm:characterization}
For any consistent $T\supseteq\PA$, not necessarily recursively
enumerable, the following are equivalent.
\begin{enumerate}
\item\label{it:right}
Pointwise provable equality is preserved by right composition:
$f\approx_T h$ implies $f\bullet g\approx_T h\bullet g$ for every
program index $g$.
\item\label{it:congruence}
The relation $\approx_T$ is a composition congruence.
\item\label{it:pi}
The theory $T$ proves every true $\Pi^0_1$ sentence.
\item\label{it:extensional}
For all $e,d$, $e\approx_T d$ if and only if $\varphi_e$ and
$\varphi_d$ are extensionally equal as partial functions on $\omega$.
\end{enumerate}
\end{theorem}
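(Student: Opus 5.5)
We will prove the cycle \ref{it:right}$\Rightarrow$\ref{it:pi}$\Rightarrow$\ref{it:extensional}$\Rightarrow$\ref{it:congruence}$\Rightarrow$\ref{it:right}. The last implication is immediate from the definition of a composition congruence. For \ref{it:extensional}$\Rightarrow$\ref{it:congruence}, note that extensional equality of partial functions is an equivalence relation preserved by $\comp$ in both arguments, because $\varphi_{f\bullet g}=\varphi_f\circ\varphi_g$.

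For \ref{it:pi}$\Rightarrow$\ref{it:extensional}, first suppose $e\approx_T d$. Consistency of $T$ alone does not give soundness, so we argue as follows. If $\varphi_e(n)\downarrow=y$, then $\PA\vdash C_e(\bar n,\bar y)$ by $\Sigma^0_1$-completeness. Hence $T\vdash C_d(\bar n,\bar y)$. If $\varphi_d(n)$ were undefined, the $\Pi^0_1$ sentence $\neg\exists y\,C_d(\bar n,y)$ would be true, hence provable by \ref{it:pi}, contradicting consistency. If $\varphi_d(n)=y'\neq y$, then $\PA\vdash C_d(\bar n,\bar y')$, and provable functionality gives $T\vdash\bar y=\bar y'$, again a contradiction. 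By symmetry the two functions agree on their domains, so they are extensionally equal.

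Conversely, suppose $\varphi_e=\varphi_d$ and fix $n$. If both are defined with value $y$, $\Sigma^0_1$-completeness together with functionality gives $\PA\vdash\forall y'(C_e(\bar n,y')\leftrightarrow y'=\bar y)$, and likewise for $d$. This yields $E_{e,d}(\bar n)$. If both diverge, the true $\Pi^0_1$ sentences $\varphi_e(\bar n)\uparrow$ and $\varphi_d(\bar n)\uparrow$ are provable in $T$ by \ref{it:pi}, and $E_{e,d}(\bar n)$ follows.

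The main step is \ref{it:right}$\Rightarrow$\ref{it:pi}, which we prove by generalizing the construction of Theorem~\ref{thm:obstruction}. Let $\forall n\,\theta(n)$ be a true $\Pi^0_1$ sentence with $\theta$ primitive recursive (after $\PA$-provable normal form). Take $f$ and $g$ with $\PA$-provable graphs
\[
 C_f(n,y)\leftrightarrow\theta(n)\land y=n,
 \qquad
 C_g(x,p)\leftrightarrow\neg\theta(p)\land\forall r<p\,\theta(r),
\]
obtained from~\eqref{eq:graph-realization}. Since each $\theta(\bar n)$ is a true primitive recursive instance, $\PA\vdash\theta(\bar n)$, and therefore $f\approx_T\mathrm{id}$. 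By \ref{it:right}, $f\bullet g\approx_T\mathrm{id}\bullet g$. As in Theorem~\ref{thm:obstruction}, $\PA$ proves that $f\bullet g$ is nowhere defined and that $\mathrm{id}\bullet g\equiv g$. The instance at input $0$ then gives $T\vdash g(0)\uparrow$, and $\PA\vdash g(0)\uparrow\leftrightarrow\forall n\,\theta(n)$ by least-number reasoning. Hence $T\vdash\forall n\,\theta(n)$.

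The main obstacle is the bookkeeping in this last step. It requires a $\PA$-provable reduction of an arbitrary $\Pi^0_1$ sentence to the form $\forall n\,\theta(n)$ with $\theta$ primitive recursive (or $\Delta_0$), so that the graph formulas are $\Sigma^0_1$ and provably functional, as needed for applying~\eqref{eq:graph-realization}. That reduction is standard.
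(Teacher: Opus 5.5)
Your proof is correct and follows the paper's argument essentially step for step. You use the same cycle \ref{it:right}$\Rightarrow$\ref{it:pi}$\Rightarrow$\ref{it:extensional}$\Rightarrow$\ref{it:congruence}$\Rightarrow$\ref{it:right}, the same guard program $f$ and least-counterexample search $g$ built from a primitive recursive matrix of the $\Pi^0_1$ sentence, and the same appeal to $\Sigma^0_1$-completeness, provable functionality, condition~\ref{it:pi} and consistency in \ref{it:pi}$\Rightarrow$\ref{it:extensional}; the only cosmetic difference is that you derive extensional equality directly from $e\approx_T d$, whereas the paper argues contrapositively by producing $T\vdash\neg E_{e,d}(\bar n)$ at a point where $\varphi_e$ and $\varphi_d$ differ.
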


\begin{proof}
We first prove \ref{it:right}$\Rightarrow$\ref{it:pi}.
Let $\theta$ be a true $\Pi^0_1$ sentence.  Choose a primitive recursive
predicate $R$ with a decision procedure whose correctness is provable
in $\PA$, such that
\[
 \PA\vdash\theta\longleftrightarrow\forall m\,R(m).
\]
Define programs
\[
 f(n)=
 \begin{cases}
 n,&R(n),\\
 \uparrow,&\neg R(n),
 \end{cases}
 \qquad
 g(x)=\mu m\,\neg R(m).
\]
Choose their indices so that $\PA$ proves
\begin{align}
 C_f(n,y)&\longleftrightarrow R(n)\land y=n,
 \label{eq:f-graph}\\
 C_g(x,m)&\longleftrightarrow
        \neg R(m)\land\forall k<m\,R(k).
 \label{eq:g-graph}
\end{align}
For each $n\in\omega$, the $\PA$-verified decision procedure for $R$ gives
$\PA\vdash R(\bar n)$. Hence
$\PA\vdash f(\bar n)=\bar n$, and $f\approx_T\mathrm{id}$.

Equations~\eqref{eq:composition-graph}, \eqref{eq:f-graph}, and
\eqref{eq:g-graph} show that
$\PA$ proves that $f\bullet g$ is everywhere undefined:
any intermediate output $m$ would have to satisfy both $\neg R(m)$
and $R(m)$. Thus $f\bullet g\equiv_Tz$.
By~\eqref{eq:g-graph},
\[
 \PA\vdash\forall x\,
 \bigl(g(x)\uparrow\longleftrightarrow\theta\bigr).
\]
Consequently,
\[
 g\approx_Tz\quad\Longleftrightarrow\quad T\vdash\theta.
\]
The forward implication uses the instance at input $0$;
the reverse implication follows from the displayed equivalence.
Assumption~\ref{it:right}, applied to
$f\approx_T\mathrm{id}$, now gives $f\bullet g\approx_Tg$ by the
$\PA$-provable identity law. Hence $g\approx_Tz$ and $T\vdash\theta$.

Next assume~\ref{it:pi}, and fix indices $e,d$ such that $\varphi_e$ and
$\varphi_d$ are extensionally equal. Fix $n\in\omega$. If
$\varphi_e(n)=\varphi_d(n)=y$, computation-history codes yield $\PA$-proofs
of $C_e(\bar n,\bar y)$ and $C_d(\bar n,\bar y)$; $\PA$-provable functionality
then yields $\PA\vdash E_{e,d}(\bar n)$. If both functions are undefined at
$n$, the two divergence assertions are true $\Pi^0_1$ sentences, so $T$
proves them and hence proves $E_{e,d}(\bar n)$. Thus extensional equality
implies $\approx_T$.

Conversely, suppose that $\varphi_e$ and $\varphi_d$ are extensionally
unequal. Choose $n\in\omega$ at which they differ. If both are defined there
with distinct outputs, their computation-history codes and $\PA$-provable
functionality yield a $\PA$-proof of $\neg E_{e,d}(\bar n)$. If exactly one
is defined there, its computation-history code yields a $\PA$-proof of the
corresponding graph instance, while the other program's divergence at $n$ is
a true $\Pi^0_1$ sentence and hence is provable in $T$. Thus
$T\vdash\neg E_{e,d}(\bar n)$ in either case. Consistency excludes
$T\vdash E_{e,d}(\bar n)$.
This proves~\ref{it:extensional}.

Finally, extensional equality is a composition congruence, so
\ref{it:extensional}$\Rightarrow$\ref{it:congruence}, and
\ref{it:congruence}$\Rightarrow$\ref{it:right} is immediate.
\end{proof}

For consistent recursively enumerable $T\supseteq\PA$,
$\Con(T)$ is a true $\Pi^0_1$ sentence unprovable in $T$ by
G\"odel's second incompleteness theorem. Thus condition~\ref{it:pi}
fails, and $\approx_T$ is not a composition congruence.
Taking $\theta=\Con(T)$ in the first implication recovers the
programs of Theorem~\ref{thm:obstruction}.
The theory obtained by adjoining all true $\Pi^0_1$ sentences to
$\PA$ satisfies condition~\ref{it:pi}; it is consistent and is not
recursively enumerable.

\section{The generated composition congruence}\label{sec:generated}

For an extension $T\supseteq\PA$, let $\sim_T$ be the least
composition congruence on program indices containing $\approx_T$;
it is the intersection of all such congruences.

\begin{theorem}[Generated composition congruence]\label{thm:generated-congruence}
Let $T\supseteq\PA$, not necessarily consistent or recursively enumerable.
Then
\[
 {\sim_T}=
 \begin{cases}
 \{(e,d)\in\omega^2:\varphi_e=\varphi_d\},
       &\text{if $T$ is $\Sigma^0_1$-sound},\\
 \omega\times\omega,&\text{otherwise}.
 \end{cases}
\]
\end{theorem}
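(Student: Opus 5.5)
We treat the two cases separately. Throughout, write $\sim_{\mathrm{ext}}$ for extensional equality $\{(e,d):\varphi_e=\varphi_d\}$. Extensional equality is a composition congruence, because $\varphi_{f\bullet g}=\varphi_f\circ\varphi_g$.

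\emph{The $\Sigma^0_1$-sound case.} We prove the inclusion ${\approx_T}\subseteq{\sim_{\mathrm{ext}}}$ and the reverse inclusion ${\sim_{\mathrm{ext}}}\subseteq{\sim_T}$.

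For the first inclusion, suppose $T\vdash E_{e,d}(\bar n)$ for every $n$, and suppose $\varphi_e(n)\simeq\varphi_d(n)$ failed at some $n$.
\begin{itemize}
\item If both are defined with distinct values $y\ne y'$, then $\PA\vdash C_e(\bar n,\bar y)\land C_d(\bar n,\bar y')$. Together with functionality this refutes $E_{e,d}(\bar n)$, so $T$ is inconsistent and hence not $\Sigma^0_1$-sound.
\item If, say, $\varphi_e(n)=y$ and $\varphi_d(n)\uparrow$, then $T\vdash C_e(\bar n,\bar y)$. Combined with $E_{e,d}(\bar n)$ this gives $T\vdash\varphi_d(\bar n)\downarrow$, a false $\Sigma^0_1$ sentence.
\end{itemize}
Both outcomes contradict $\Sigma^0_1$-soundness. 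So $\approx_T\subseteq\sim_{\mathrm{ext}}$, and minimality of $\sim_T$ gives $\sim_T\subseteq\sim_{\mathrm{ext}}$.

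For the reverse inclusion, let $\varphi_e=\varphi_d$; we must show $e\sim_T d$. The key tool is the mechanism of Theorem~\ref{thm:characterization}: any congruence containing $\approx_T$ forces $T$-unprovable true $\Pi^0_1$ facts. Given a true $\Pi^0_1$ sentence $\theta\equiv\forall m\,R(m)$, build $f_\theta,g_\theta$ as in that proof. Then $f_\theta\approx_T\mathrm{id}$, so $f_\theta\sim_T\mathrm{id}$, and right composition with $g_\theta$ gives
\[
g_\theta\equiv_T\mathrm{id}\bullet g_\theta\sim_T f_\theta\bullet g_\theta\equiv_T z .
\]
Since $\equiv_T\subseteq\approx_T$, this yields $g_\theta\sim_T z$.

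Now let $H_n$ be the true $\Pi^0_1$ sentence ``$\varphi_e(\bar n)\uparrow$ and $\varphi_d(\bar n)\uparrow$'' when both diverge at $n$; when both converge, $E_{e,d}(\bar n)$ is already $\PA$-provable. To handle all inputs at once we use a single uniform construction. Let $B$ be the set of $n$ where both diverge, and build a program $h$ that runs $e$ and $d$ in parallel on its input, halting when either halts, with $\PA$-provable graph equations.

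The cleanest route is to find a program $g'$ that is nowhere defined with $g'\sim_T z$, together with a PA-provable relation showing that $e$ is $\approx_T$ to a program of the form $k\bullet\langle\ldots\rangle$. Concretely, let $r$ be the program $r(x)\simeq$ ``search for the least $s$ with $\mathcal{T}_1(\bar e,x,s)\lor\mathcal{T}_1(\bar d,x,s)$; then output $\varphi_e(x)$''. Then $r\approx_T e$: at a convergent input $\PA$ verifies the value, and at a divergent input each side diverges under the same hypothesis, because $E$ at $\bar n$ reduces provably to agreement when the search succeeds. Symmetrically, $r'\approx_T d$ with $r'$ outputting $\varphi_d(x)$. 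Since $\PA\vdash$ ``if $\varphi_e(x)$ and $\varphi_d(x)$ both halt they agree'' is not available, introduce an outer guard $f$ that diverges on pairs with disagreeing coordinates, composed after a pairing program. Use left composition, which is free, together with the right-composition trick on the guard: the guard is pointwise $\approx_T$ the unguarded projection, because disagreement never actually occurs at standard inputs, and those instances are decidable. Right composition then transfers the equivalence and gives $r\sim_T r'$, hence $e\sim_T d$.

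\emph{The non-sound case.} Suppose $T$ proves a false $\Sigma^0_1$ sentence $\exists m\,\neg R(m)$, so $\theta=\forall m\,R(m)$ is true and $T\vdash\neg\theta$. Take $f_\theta,g_\theta$ as above; then $g_\theta\sim_T z$ by the computation above.

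Next precompose with a constant map. For arbitrary $e$, consider $e\bullet c$, where $c$ is a program such that $\PA\vdash c(x)\simeq$ ``$\mu m\,\neg R(m)$ then output $x$'', i.e.\ $c$ is the identity guarded by $g_\theta$. Then $c\sim_T\mathrm{id}$, obtained by applying a guard right-composition in the same way. On the other hand, $T\vdash\neg\theta$ means $T$ proves that $c$ behaves as the identity, while truth makes $c$ nowhere defined; this is where the two sides collide. More directly, $\approx_T$ is the relation computed inside $T$. Since $T\vdash\exists m\,\neg R(m)$ and each instance $\neg R(\bar m)$ is refutable in $\PA$, we get $T\vdash\neg\Con$-type facts. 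The plan is to show $\mathrm{id}\sim_T z$; then for every $e$,
\[
e\equiv\mathrm{id}\bullet e\sim_T z\bullet e\equiv_T z,
\]
so $\sim_T=\omega\times\omega$.

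To obtain $\mathrm{id}\sim_T z$, use a program $k(x)\simeq x$ if $\exists m<x$ with $\neg R(m)$, and ${\uparrow}$ otherwise. At each standard $n$ the condition is decidable and false, so $k\approx_T z$. Then consider $k\bullet w$, where $w(x)=x+\mu m\,\neg R(m)+1$. Here $T$ proves $w$ total, and $T$ proves $k\bullet w\equiv_T w$, so that $w\sim_T z\bullet w$, which is nowhere defined. Finally recover $\mathrm{id}$ from $w$ by left-composing a subtraction program in $T$.

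\emph{Main obstacle.} The hardest step is the sound-case inclusion $\sim_{\mathrm{ext}}\subseteq\sim_T$: pairs whose common divergence set is not uniformly provable must be linked by a finite chain of congruence steps. The first approach I would try is the single guard program described above, reducing everything to one application of the right-composition trick.
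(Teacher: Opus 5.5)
\paragraph{Gap in the sound case.} You never prove the inclusion $\{(e,d):\varphi_e=\varphi_d\}\subseteq{\sim_T}$, which is the step you flag yourself, and the guard you sketch cannot supply it. Right composition lets you replace an outer program only by one that is $\approx_T$-equivalent to it at \emph{every} standard input of the outer program. Standard pairs $\langle a,b\rangle$ with $a\neq b$ exist. At such a pair, $\PA$ proves that your guard diverges while the projection converges. Since $T$ is consistent, the guard is therefore not $\approx_T$-equivalent to the projection. That disagreement never occurs on the range of the pairing program is a property of the composite, which is exactly what right composition cannot exploit.

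There is a second problem. The map $x\mapsto\langle\varphi_e(x),\varphi_d(x)\rangle$ has domain $\mathrm{dom}\,\varphi_e\cap\mathrm{dom}\,\varphi_d$. Relating the composite back to $e$ therefore needs $T\vdash\varphi_e(\bar n)\downarrow\to\varphi_d(\bar n)\downarrow$ at inputs where both diverge, and that is a true $\Pi^0_1$ fact that may be unprovable. Your auxiliary $r$ does not help, since it is simply $\equiv_{\PA}e$. The observation $g_\theta\sim_T z$ only handles particular nowhere-defined programs.

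\paragraph{The missing idea.} The inner program must output data that make the guard's extra condition \emph{true} at every standard input where the unguarded outer program converges. The paper uses three programs:
\begin{itemize}
\item $H_e(x)=\langle s,x\rangle$, with $s$ the least halting computation of $e$ on $x$;
\item the unguarded $B_e(\langle s,x\rangle)=U(s)$ when $\mathcal{T}_1(\bar e,x,s)$;
\item the guarded $A_{e,d}$, which additionally requires $C_d(x,U(s))$.
\end{itemize}
At a standard pair there are two cases. Either $\mathcal{T}_1(\bar e,\bar x,\bar s)$ is $\PA$-refutable, and both programs are undefined. Or it is true; then $C_d(\bar x,\overline{U(s)})$ is a true $\Sigma^0_1$ sentence because $\varphi_d=\varphi_e$, hence $\PA$-provable. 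So $A_{e,d}\approx_{\PA}B_e$. The guard's condition is $\Sigma^0_1$ rather than decidable, and that is the point. It follows that $e\equiv_{\PA}B_e\bullet H_e\sim_T A_{e,d}\bullet H_e$, whose graph is $\PA$-provably $C_e\wedge C_d$. Symmetry then gives $e\sim_T d$.

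\paragraph{The rest of your proposal is sound.} Your proof of ${\approx_T}\subseteq\{(e,d):\varphi_e=\varphi_d\}$ matches the paper's. Your non-sound case is correct and self-contained: $k\approx_{\PA}z$ and $k\bullet w\equiv_{\PA}w$ give $w\sim_T z$, and $s\bullet w\equiv_T\mathrm{id}$ because $T\vdash\exists m\,\neg R(m)$. The paper instead derives $\delta\sim_T z$ from the general inclusion. In fact $s\bullet w$ is $\PA$-provably the paper's $\delta$, so your detour through $c$ can be dropped.
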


\begin{proof}
First we prove that extensional equality is contained in $\sim_T$.
Fix $e,d$ with $\varphi_e=\varphi_d$.
Choose a primitive recursive bijective pairing $\langle s,x\rangle$
with primitive recursive projections whose inverse identities are
provable in $\PA$. Put
\[
 D_e(x,s):\!\equiv
 \mathcal{T}_1(\bar e,x,s)\land\forall r<s\,\neg\mathcal{T}_1(\bar e,x,r).
\]
Choose program indices $H_e,B_e,A_{e,d}$ so that $\PA$ proves the
universal closures of
\begin{align*}
 C_{H_e}(x,n)&\longleftrightarrow
       \exists s\,\bigl(D_e(x,s)\land n=\langle s,x\rangle\bigr),\\
 C_{B_e}(\langle s,x\rangle,y)&\longleftrightarrow
       \mathcal{T}_1(\bar e,x,s)\land U(s)=y,\\
 C_{A_{e,d}}(\langle s,x\rangle,y)&\longleftrightarrow
       \mathcal{T}_1(\bar e,x,s)\land U(s)=y\land C_d(x,y).
\end{align*}
These are $\Sigma^0_1$ relations whose functionality is provable in
$\PA$, so~\eqref{eq:graph-realization} supplies the required indices.
The program $H_e$ returns the pair consisting of the least
computation-history code for $e$ on its input and the input itself.

For each standard pair $\langle s,x\rangle$, $\PA$ proves the true
instance of either $\mathcal{T}_1(e,x,s)$ or its negation. If
$\mathcal{T}_1(e,x,s)$ is false, the graph equations prove that both $A_{e,d}$ and $B_e$
are undefined there. If it is true, then
$\varphi_d(x)=\varphi_e(x)=U(s)$. A computation-history code for
$d$ on input $x$ therefore gives a $\PA$-proof of
$C_d(\bar x,\overline{U(s)})$. The graph equations and functionality
give $\PA$-provable equality of the two programs at that pair.
Thus $A_{e,d}\approx_{\PA}B_e$.

The least-number principle and functionality of $C_e$ give
\[
 \PA\vdash\forall x\,\forall y\,
 \left(C_e(x,y)\longleftrightarrow
       \exists s\,\bigl(D_e(x,s)\land U(s)=y\bigr)\right).
\]
Indeed, if $C_e(x,y)$ holds, the least $s$ satisfying
$\mathcal{T}_1(\bar e,x,s)$ also satisfies $C_e(x,U(s))$, so functionality
forces $U(s)=y$. Equation~\eqref{eq:composition-graph} now gives
$B_e\bullet H_e\equiv_{\PA}e$ and
\[
 \PA\vdash\forall x\,\forall y\,
 \left(C_{A_{e,d}\bullet H_e}(x,y)\longleftrightarrow
             C_e(x,y)\land C_d(x,y)\right).
\]
Since $\equiv_{\PA}\,\subseteq\,\approx_T\,\subseteq\,\sim_T$,
composition compatibility gives
$e\sim_T A_{e,d}\bullet H_e$.
Repeating the construction with $e,d$ interchanged gives
$d\sim_T A_{d,e}\bullet H_d$. The two composite graph formulas are
provably equivalent in $\PA$, by symmetry of conjunction. Hence
$e\sim_Td$.

Suppose next that $T$ is $\Sigma^0_1$-sound. If $e\approx_Td$ and
$\varphi_e(n)=m$, a computation-history code gives
$\PA\vdash C_e(\bar n,\bar m)$. Together with
$T\vdash E_{e,d}(\bar n)$ this yields
$T\vdash C_d(\bar n,\bar m)$. The latter sentence is $\Sigma^0_1$,
so it is true and $\varphi_d(n)=m$. Interchanging $e,d$ shows that
$\approx_T$ is contained in extensional equality. Extensional equality
is a composition congruence, so minimality of $\sim_T$, together with
the first part, proves the first case.

Finally, suppose that $T$ proves a false $\Sigma^0_1$ sentence
$\sigma$. Choose a program $\delta$ such that
\[
 \PA\vdash\forall x\,\forall y\,
 \bigl(C_\delta(x,y)\longleftrightarrow\sigma\land y=x\bigr).
\]
This graph is $\Sigma^0_1$ and provably functional in $\PA$.
Since $T\vdash\sigma$, we have $\delta\equiv_T\mathrm{id}$.
Since $\sigma$ is false, $\varphi_\delta=\varphi_z$, so the first
part gives $\delta\sim_Tz$. Thus $\mathrm{id}\sim_Tz$, and for
every program index $e$,
\[
 e\equiv_{\PA}e\bullet\mathrm{id}
   \sim_Te\bullet z\equiv_{\PA}z.
\]
Therefore $\sim_T$ is the universal relation.
\end{proof}

The quotient $\omega/{\sim_T}$ is therefore the monoid of unary partial
recursive functions when $T$ is $\Sigma^0_1$-sound, and the one-element
monoid otherwise. In particular, closing $\approx_{\PA}$ under composition 
gives exactly extensional equality.

The universal relation in the second case is the generated congruence 
$\sim_T$. If $T$ is consistent but not $\Sigma^0_1$-sound, it proves a
false $\Sigma^0_1$ sentence $\sigma$. Its negation is a true $\Pi^0_1$
sentence that $T$ cannot prove, by consistency.
Theorem~\ref{thm:characterization} therefore shows that $\approx_T$ is
not a composition congruence, whereas
Theorem~\ref{thm:generated-congruence} makes $\sim_T$ universal.
Moreover, $\approx_T$ is not universal: consistency prevents the
constant-zero and constant-one programs from being equivalent, since
$\PA$ proves that their values differ at input $0$.
If $T$ is inconsistent, both relations are universal; this case is
excluded from Theorem~\ref{thm:characterization}.

\section{Weak totality and ranges}\label{sec:applications}

For consistent recursively enumerable $T\supseteq\PA$, weak totality
and the range assignment \cite[Definition~1.1, p.~106, and p.~111]{Montagna}
are not invariant under $\approx_T$.

\subsection{Weak totality}

For a program index $e$, define the predicate
\begin{equation}\label{eq:weak}
 W_T(e)\quad\Longleftrightarrow\quad
 \text{for every program index }d,\quad
 \comp(e,d)\approx_Tz\ \Longrightarrow\ d\approx_Tz.
\end{equation}
This is the index formulation of weak totality in Di Paola--Montagna
\cite[Definition~1.5, p.~645]{DiPaolaMontagna}. Montagna calls a morphism
satisfying this condition DPH total \cite[Definition~1.1, p.~106]{Montagna}.

Let $d_e$ be a program index for the partial identity on the domain of
$\varphi_e$, with $\PA$-provable graph equation
\[
 C_{d_e}(x,y)\longleftrightarrow x=y\land\exists v\,C_e(x,v).
\]
The condition $d_e\approx_T\mathrm{id}$ is the index condition
corresponding to R totality in Montagna's terminology
\cite[Definitions~1.6--1.7, p.~107]{Montagna} and to totality in the
terminology of Di Paola--Montagna
\cite[Definition~1.4, p.~645]{DiPaolaMontagna}.

\begin{proposition}\label{prop:weak}
For consistent recursively enumerable $T\supseteq\PA$, the predicate
$W_T$ is not invariant under $\approx_T$.
\end{proposition}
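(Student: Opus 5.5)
The plan is to reuse the pair of programs from Theorem~\ref{thm:obstruction}. It supplies $f$ with $f\approx_T\mathrm{id}$ together with a witness $g$ showing that $f$ fails weak totality. So it suffices to show that $W_T(\mathrm{id})$ holds while $W_T(f)$ fails. Then $\mathrm{id}$ and $f$ are two $\approx_T$-equivalent indices on which $W_T$ takes different values.

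First, I would verify $W_T(\mathrm{id})$. Fix any program index $d$. By~\eqref{eq:composition-graph} and the graph equation for $\mathrm{id}$, $\PA$ proves
\[
 \forall x\,\forall y\,\bigl(C_{\mathrm{id}\bullet d}(x,y)\longleftrightarrow\exists u\,(C_d(x,u)\land u=y)\bigr),
\]
and hence $\PA\vdash\forall x\,\forall y\,(C_{\mathrm{id}\bullet d}(x,y)\leftrightarrow C_d(x,y))$. Thus $\mathrm{id}\bullet d\equiv_{\PA}d$, so $\mathrm{id}\bullet d\approx_T d$. Now suppose $\mathrm{id}\bullet d\approx_T z$. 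Since $\approx_T$ is an equivalence relation, we get $d\approx_T z$. This proves $W_T(\mathrm{id})$.

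Second, I would show that $W_T(f)$ fails, using the program $g$ of Theorem~\ref{thm:obstruction}. By~\eqref{eq:obstruction-properties}, $f\bullet g\equiv_T z$, and therefore $f\bullet g\approx_T z$, because uniform provable equality implies each instance at a numeral. On the other hand, $g\not\approx_T z$. Hence $d=g$ violates the implication in~\eqref{eq:weak} for $e=f$. Combined with $f\approx_T\mathrm{id}$ from~\eqref{eq:obstruction-properties}, this gives non-invariance.

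There is no real obstacle here, since the hard work is contained in Theorem~\ref{thm:obstruction}, where the second incompleteness theorem gives $g\not\approx_T z$. The only points that need care are routine: the inclusion ${\equiv_T}\subseteq{\approx_T}$, the transitivity of $\approx_T$, and the $\PA$-provable identity law $\mathrm{id}\bullet d\equiv_{\PA}d$ for arbitrary $d$, not only for the particular $g$ used in Theorem~\ref{thm:obstruction}.
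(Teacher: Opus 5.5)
Your proposal is correct and is essentially the paper's proof. You get $W_T(\mathrm{id})$ from the $\PA$-provable identity law, and $g$ from Theorem~\ref{thm:obstruction} witnesses the failure of $W_T(f)$ even though $f\approx_T\mathrm{id}$; you also spell out the routine steps the paper leaves implicit (the identity law for arbitrary $d$, the inclusion ${\equiv_T}\subseteq{\approx_T}$, and transitivity of $\approx_T$).
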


\begin{proof}
The $\PA$-provable identity law gives $W_T(\mathrm{id})$.
For $f,g$ in Theorem~\ref{thm:obstruction},
$f\bullet g\approx_Tz$ but $g\not\approx_Tz$, so $W_T(f)$ fails,
although $f\approx_T\mathrm{id}$.
\end{proof}

For $T=\PA$, the graph equation for $f$ has the form used for the first
program in Montagna's Example~2 \cite[p.~112]{Montagna}, with the chosen
predicate $P_{\PA}(x)$ in place of Montagna's
$\mathrm{Prf}_{\PA}(x,\ulcorner0=1\urcorner)$. Montagna defines
$\mathrm{Prf}_{\PA}$ relative to a chosen binumeration of the axioms of
$\PA$ \cite[note~2, p.~115]{Montagna}. Montagna states that the class of
his first program is R total and DPH total in $S'$. The corresponding
program $f$ above satisfies the R-totality condition
$d_f\approx_{\PA}\mathrm{id}$ but fails $W_{\PA}$.

The second program displayed in Montagna's example is the partial identity
with domain
\[
 \{p:\mathrm{Prf}_{\PA}(p,\ulcorner0=1\urcorner)\},
\]
the codes, in Montagna's proof coding, of $\PA$-derivations of $0=1$. It is
pointwise provably equal to $z$. By contrast, the unbounded-search program
$g$ defined above satisfies $f\bullet g\approx_{\PA}z$ and
$g\not\approx_{\PA}z$.

An arithmetical sentence $\sigma$ is $\Pi^0_1$-conservative over
$\PA$ if every $\Pi^0_1$ sentence provable in $\PA+\sigma$ is
already provable in $\PA$.
In \cite[Theorem~2.3, p.~111]{Montagna}, condition~(2) says that
the convergence sentence $f(\bar n)\downarrow$ is
$\Pi^0_1$-conservative over $\PA$ for every $n\in\omega$.
Each such sentence is already $\PA$-provable, so this condition holds.
Condition~(1), read as~\eqref{eq:weak}, fails.
Thus the implication (2)$\Rightarrow$(1) fails when interpreted in
terms of program indices.

\subsection{The range assignment}

For each program index $e$, choose a program index $\rho_e$ for the partial
identity whose domain is the set-theoretic range of $\varphi_e$. With $r$ as
input and $w$ as output, its $\PA$-provable graph equation is
\begin{equation}\label{eq:range}
 C_{\rho_e}(r,w)\longleftrightarrow
 r=w\land\exists x\,C_e(x,r).
\end{equation}
This is the range construction defined in
\cite[p.~111]{Montagna}; the range assertion for $S'_T$ is repeated in
\cite[p.~646]{DiPaolaMontagna}.

\begin{proposition}\label{prop:range}
For consistent recursively enumerable $T\supseteq\PA$, there is a program
$u$ with $u\approx_Tz$ but $\rho_u\not\approx_T\rho_z$.
Thus~\eqref{eq:range} does not define an operation on $Q_T$.
\end{proposition}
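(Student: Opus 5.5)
Take $u:=g$, the unbounded search $g(x)=\mu p\,P_T(p)$ from the proof of Theorem~\ref{thm:obstruction}. That $u\approx_T z$ fails there, so $g$ is the wrong choice. Instead choose a program $u$ whose divergence at each standard input is $\PA$-provable, but whose range being empty is not provable at some standard point. The natural choice is
\[
 u(x)=\begin{cases} 0,& P_T(x),\\ \uparrow,&\neg P_T(x),\end{cases}
 \qquad
 \PA\vdash C_u(x,y)\longleftrightarrow P_T(x)\land y=0 ,
\]
with its index supplied by~\eqref{eq:graph-realization}; the relation is $\Sigma^0_1$ and provably functional.

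First, $u\approx_T z$. Fix $n\in\omega$. By consistency of $T$, $P_T(n)$ is false, so $\PA\vdash\neg P_T(\bar n)$ because $P_T$ is primitive recursive. Hence $\PA\vdash\forall y\,\neg C_u(\bar n,y)$. Together with the provably empty graph of $z$, this gives $\PA\vdash E_{u,z}(\bar n)$.

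Second, compute the ranges. By~\eqref{eq:range} and the graph of $u$,
\[
 \PA\vdash C_{\rho_u}(\bar 0,w)\longleftrightarrow w=0\land\exists x\,P_T(x),
\]
so $\PA\vdash\rho_u(\bar0)\uparrow\leftrightarrow\Con(T)$. For $z$, the graph is provably empty, so $\PA\vdash\forall r\,\rho_z(r)\uparrow$. If $\rho_u\approx_T\rho_z$, then the instance at input $0$ gives $T\vdash\rho_u(\bar0)\uparrow$, hence $T\vdash\Con(T)$. This contradicts G\"odel's second incompleteness theorem \cite[Theorem~2.7(ii), p.~271]{Feferman}, exactly as in Theorem~\ref{thm:obstruction}. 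Therefore $\rho_u\not\approx_T\rho_z$, and since $q_T(u)=q_T(z)$, the assignment $q_T(e)\mapsto q_T(\rho_e)$ is not well defined on $Q_T$.

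The main point needing care is the choice of $u$. The divergence of $u$ must be provable separately at each input, while the range formula quantifies existentially over all inputs inside $T$. That is the same gap between pointwise and uniform provability that the earlier results exploit. The remaining steps are routine manipulations of the provable graph equations. Alternatively, the productivity argument of Section~\ref{sec:productive} could replace $P_T$ by $\neg R$ with $R(n):\!\equiv\neg\mathcal{T}_1(\bar d,\bar d,n)$.
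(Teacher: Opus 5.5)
Your proof is correct and is essentially the paper's. It uses the same program $u$ that outputs $0$ exactly on codes of $T$-derivations of $0=1$, gets $u\approx_T z$ from $\PA\vdash\neg P_T(\bar n)$ at each standard $n$, and uses the instance at input $0$ of $\rho_u\approx_T\rho_z$ to obtain $T\vdash\Con(T)$, contrary to the second incompleteness theorem; the opening false start with $u:=g$ should simply be deleted.
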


\begin{proof}
Let
\[
 u(p)=
 \begin{cases}
 0,&P_T(p),\\
 \uparrow,&\neg P_T(p)
 \end{cases},
\]
with $\PA$-provable graph equation
\[
 \forall p\,\forall y\,
 \bigl(C_u(p,y)\longleftrightarrow P_T(p)\land y=0\bigr).
\]
For every $p\in\omega$, $\PA$ proves $\neg P_T(\bar p)$ and
$u(\bar p)\uparrow$. Hence $u\approx_Tz$.
But~\eqref{eq:range} yields
\[
 \PA\vdash\forall r\,\forall w\,
 \left(C_{\rho_u}(r,w)\longleftrightarrow
       r=w\land r=0\land\exists p\,P_T(p)\right).
\]
In particular,
$\PA\vdash C_{\rho_u}(0,0)\leftrightarrow\exists p\,P_T(p)$.
The program $\rho_z$ is $\PA$-provably everywhere undefined.
If $\rho_u\approx_T\rho_z$, the instance at input $0$ gives
$T\vdash\neg\exists p\,P_T(p)$, again contradicting the second
incompleteness theorem.
\end{proof}

\paragraph{Use of AI tools.}
The author used ChatGPT\footnote{Some draft reviews used the
HostileRefereeGPT instructions:
\url{https://github.com/flengyel/HostileRefereeGPT}.}
and Claude to explore mathematical arguments and related literature
and to revise drafts. The author takes responsibility for the
mathematical arguments and references.

\end{document}